\author[F. Andersson]{Fredrik Andersson}\address{Centre for Mathematical Sciences, Lund University, Sweden}
\email{fa@maths.lth.se}
\author[M. Carlsson]{Marcus Carlsson}\address{Centre for Mathematical Sciences, Lund University, Sweden}
\email{marcus.carlsson@math.lu.se}
\author[K.-M. Perfekt]{Karl-Mikael Perfekt}\address{Department of Mathematical Sciences, Norwegian University of Science and Technology, NO-7491 Trondheim, Norway}
\email{karl-mikael.perfekt@math.ntnu.no}
\subjclass[2010]{Primary 15A18, 15A60, 47A60, Secondary 15A16, 15A45, 47A30, 47A55, 47B10}
\keywords{Lipschitz estimates, functional calculus, singular values, doubly substochastic matrices.}
\newcommand{\R}{\mathbb{R}}
\newcommand{\re}{\mathsf{Re}}
\newtheorem{theorem}{Theorem}[section]
\newtheorem{proposition}[theorem]{Proposition}
\newtheorem{lemma}[theorem]{Lemma}
\newtheorem{corollary}[theorem]{Corollary}
\def\F {f}
\def\C {\mathbb{C}}
\DeclareMathOperator{\Lip}{Lip}
\title{Operator-Lipschitz estimates for the singular value functional calculus}
\begin{document}
\maketitle

\begin{abstract}
We consider a functional calculus for compact operators, acting on the singular values rather than the spectrum, which appears frequently in applied mathematics. Necessary and sufficient conditions for this singular value functional calculus to be Lipschitz-continuous with respect to the Hilbert-Schmidt norm are given. We also provide sharp constants.
\end{abstract}
\section{Introduction}
For simplicity we restrict attention to the finite-dimensional matrix case in this introduction. Let $A$ be a matrix with singular value decomposition $A=U\Sigma V^*$, and consider the operation of changing the singular values by applying some function $\F:\mathbb{R}_+\rightarrow \C$ to $\Sigma$, thus yielding a new matrix which we will call $\F_s(A)$, where the subscript $s$ indicates that we are considering a \textit{singular value functional calculus.} For matrices with non-trivial nullspaces, it is easy to see that the condition $\F(0)=0$ is necessary for $\F_s(A)$ to be well defined (Section \ref{s2}). Let us also remark that, in case $\F$ is a function defined on $\mathbb{C}$ and $A$ is a normal matrix, then $\F(A)$ is defined by the classical functional calculus (CFC) based on the spectral theorem. However, it is rarely the case that $\F(A)=\F_s(A)$ except when $A$ is positive (Section \ref{s2}).

The operation $A\mapsto \F_s(A)$ is commonly seen in applied mathematics, since it often appears as the proximal operator \cite{rw} in Matrix Optimization Problems and Compressed Sensing. For applications in Computer Vision, Structure from Motion, Photometric Stereo and Optical Flow, see \cite{larsson2,larsson1} and the references therein. See \cite{chu} for its use in alternating projection schemes and \cite{anderssonalternating} for a problem in financial mathematics \cite{higham}. For applications in Control Systems see \cite{fazel}, MRI see \cite{candesMRI}, and for applications to complex frequency estimation see \cite{actwIEEE}. More examples can be found in \cite{ding, recht}.

When studying convergence of algorithms utilizing the singular value functional calculus, it is important to have bounds for the distance $\|\F_s(A)-\F_s(B)\|_{F}$ given $\|A-B\|_{F}$, where the subscript $F$ indicates that we are dealing with the Frobenius norm, (which is the same as the Hilbert-Schmidt norm $\|\cdot\|_{\mathcal{S}_2}$, but we will follow standard conventions and use this notation only for the infinite-dimensional case). We thus define \begin{equation}\label{SiOpLip}\|\F_s\|_{\Lip}=\sup_{A,B}\frac{\|\F_s(A)-\F_s(B)\|_{F}}{\|A-B\|_{F}}.\end{equation} In Section \ref{s4} we shall show that this supremum turns out not to depend on the dimension of the matrices $A,B$, which is why this is omitted from the notation.

If one restricts attention to positive matrices in the above supremum, then it is known that it equals $\|\F\|_{\Lip}$. This follows from more general work concerning the CFC-case \cite{kittaneh}, a result which is also presented with a very simple proof in \cite[Lemma VII.5.4]{bhatia} and was rediscovered in \cite{wihler}. We remark that in the CFC-case, results concerning H\"older and Lipschitz continuity with respect to various operator norm have a long history, see e.g. \cite{potapov,farforovskaya,aleksandrov} and the references therein.

The main result of this paper is the following:
\begin{theorem}\label{aoa}
Let $\F:\mathbb{\R}_+\rightarrow \C$ be continuous with $\F(0)=0$. Then $\|\F_s\|_{\Lip}\leq \sqrt{2}\|\F\|_{\Lip}$ and the constant $\sqrt{2}$ is the best possible (if it is to hold for all Lipschitz functions $\F$). However, if $\F$ is real valued, then $$\|\F_s\|_{\Lip}= \|\F\|_{\Lip}.$$
\end{theorem}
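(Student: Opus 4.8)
\emph{Proof idea.} The plan is to derive everything from the known operator‑Lipschitz estimate for the classical functional calculus of \emph{selfadjoint} operators in the Hilbert--Schmidt norm, by passing through the standard selfadjoint dilation. First, the lower bound $\|\F_s\|_{\Lip}\ge\|\F\|_{\Lip}$ — valid for every continuous $\F$ with $\F(0)=0$ — is immediate: for $A=\diag(s,0,\dots,0)$ and $B=\diag(t,0,\dots,0)$ one has $\|\F_s(A)-\F_s(B)\|_F/\|A-B\|_F=|\F(s)-\F(t)|/|s-t|$, and letting $s,t$ range over $\R_+$ produces $\|\F\|_{\Lip}$. Thus in the real‑valued case it remains only to prove $\|\F_s\|_{\Lip}\le\|\F\|_{\Lip}$, while in the general case the value $\sqrt2$ needs a separate sharpness argument.

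For the upper bound, associate to $A$ (say $m\times n$, with singular system $\sigma_k,u_k,v_k$) the selfadjoint matrix $\tilde A=\left(\begin{smallmatrix}0&A\\ A^*&0\end{smallmatrix}\right)$, whose nonzero eigenvalues are $\pm\sigma_k$ with unit eigenvectors $\tfrac1{\sqrt2}\binom{u_k}{\pm v_k}$, and let $g\colon\R\to\C$ be the odd extension of $\F$, i.e.\ $g(t)=\F(t)$ for $t\ge0$ and $g(t)=-\F(-t)$ for $t<0$; this is well defined and continuous precisely because $\F(0)=0$. Writing $g(\tilde A)$ out through the spectral decomposition of $\tilde A$ and using $g(0)=0$ gives the identity
\[
g(\tilde A)=\begin{pmatrix}0 & \F_s(A)\\ \overline{\F}_s(A)^{*} & 0\end{pmatrix},
\]
where $\overline{\F}$ denotes the complex‑conjugate function; in particular $g(\tilde A)=\widetilde{\F_s(A)}$ when $\F$ is real‑valued. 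The key elementary point is that $\|g\|_{\Lip}=\|\F\|_{\Lip}$: for $s,t$ of the same sign this follows from the Lipschitz bound for $\F$ (up to the isometry $t\mapsto-t$), while for $s\ge0\ge t$ one has $|g(s)-g(t)|=|\F(s)+\F(-t)|=|(\F(s)-\F(0))+(\F(-t)-\F(0))|\le\|\F\|_{\Lip}(s-t)$, again using $\F(0)=0$.

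Next I invoke the operator‑Lipschitz estimate $\|g(X)-g(Y)\|_F\le\|g\|_{\Lip}\,\|X-Y\|_F$, valid for all complex‑valued Lipschitz $g$ and selfadjoint $X,Y$; this is the selfadjoint counterpart of \cite[Lemma VII.5.4]{bhatia}, and in the Hilbert--Schmidt norm it is elementary, since for the spectral decompositions $X=\sum_i\lambda_iP_i$, $Y=\sum_j\mu_jQ_j$ one has $\scal{P_iQ_j,\,P_kQ_l}_F=\delta_{ik}\delta_{jl}\|P_iQ_j\|_F^2$, so both $\|g(X)-g(Y)\|_F^2=\sum_{i,j}|g(\lambda_i)-g(\mu_j)|^2\|P_iQ_j\|_F^2$ and $\|X-Y\|_F^2=\sum_{i,j}|\lambda_i-\mu_j|^2\|P_iQ_j\|_F^2$, whence $|g(\lambda_i)-g(\mu_j)|\le\|g\|_{\Lip}|\lambda_i-\mu_j|$ finishes it. Applying this with $X=\tilde A$, $Y=\tilde B$ and combining $\|\tilde A-\tilde B\|_F^2=2\|A-B\|_F^2$ with $\|g(\tilde A)-g(\tilde B)\|_F^2=\|\F_s(A)-\F_s(B)\|_F^2+\|\overline{\F}_s(A)-\overline{\F}_s(B)\|_F^2$ yields $\|\F_s(A)-\F_s(B)\|_F^2\le 2\|\F\|_{\Lip}^2\|A-B\|_F^2$, i.e.\ $\|\F_s\|_{\Lip}\le\sqrt2\,\|\F\|_{\Lip}$. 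When $\F$ is real‑valued, $\overline\F=\F$ and $g(\tilde A)=\widetilde{\F_s(A)}$, so the last displayed identity becomes $\|g(\tilde A)-g(\tilde B)\|_F^2=2\|\F_s(A)-\F_s(B)\|_F^2$; the factor $2$ cancels and we obtain $\|\F_s\|_{\Lip}\le\|\F\|_{\Lip}$, hence equality.

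It remains to show that $\sqrt2$ cannot be improved over all complex Lipschitz $\F$. The chain above loses precisely two things: the discarded term $\|\overline{\F}_s(A)-\overline{\F}_s(B)\|_F^2=\|\F_s(\overline A)-\F_s(\overline B)\|_F^2$ (using $\overline{\F}_s(A)=\overline{\F_s(\overline A)}$), and the slack in the selfadjoint estimate for $g,\tilde A,\tilde B$; so the task is to exhibit a complex Lipschitz $\F$ and genuinely complex matrices $A,B$ for which $\F_s$ is nearly maximally expanding on $(A,B)$ while $\F_s(\overline A)\approx\F_s(\overline B)$ — equivalently, for which the spectral weights $\|P_iQ_j\|_F^2$ in the selfadjoint estimate concentrate on extremal eigenvalue pairs drawn from the ``positive halves'' of $\tilde A$ and $\tilde B$. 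I expect this sharpness construction to be the main obstacle; the natural candidate is to take $\F$ whose range is a constant‑speed circular arc and let $A,B$ be diagonal matrices subjected to a small off‑diagonal complex perturbation (or direct sums of such $2\times2$ configurations), arranged so that conjugation moves the perturbation into a direction along which $\F_s$ barely changes.
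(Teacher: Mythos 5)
Your upper-bound argument is correct and takes a genuinely different route from the paper. Where the paper reduces $\|\F_s(A)-\F_s(B)\|_F$ to a convex combination of scalar quantities by decomposing the complex doubly substochastic matrix $\overline{V_B^*V_A}\odot U_B^*U_A$ into extreme points $M_{\pi,\gamma}$ (Lemma \ref{lem:birk}), and then bounds each scalar term by $\|\F\|_{\Lip-\mathbb{C}}$, you instead dilate to the selfadjoint matrix $\tilde A=\left(\begin{smallmatrix}0&A\\A^*&0\end{smallmatrix}\right)$, pass to the odd extension $g$ of $\F$ (correctly observing $\|g\|_{\Lip}=\|\F\|_{\Lip}$), and apply the elementary Hilbert--Schmidt Lipschitz estimate for the classical functional calculus of selfadjoint operators via the decomposition $g(X)-g(Y)=\sum_{i,j}(g(\lambda_i)-g(\mu_j))P_iQ_j$. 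Your block computation of $g(\tilde A)$, including the $\overline{\F}_s(A)^*$ in the off-diagonal corner and the cancellation of the factor $2$ when $\F$ is real, is all correct. The trade-off: your route proves exactly what Theorem \ref{aoa} asks (the bound $\sqrt2\|\F\|_{\Lip}$ in general, $\|\F\|_{\Lip}$ in the real case), whereas the paper's route proves the stronger pointwise identity $\|\F_s\|_{\Lip}=\|\F\|_{\Lip-\mathbb{C}}$ for every individual $\F$; for a generic complex $\F$ this quantity lies strictly between $\|\F\|_{\Lip}$ and $\sqrt2\|\F\|_{\Lip}$, and the dilation argument cannot see it.

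The one genuine gap is the sharpness of $\sqrt2$, which you explicitly defer. You are over-anticipating the difficulty: no $2\times2$ configurations or analysis of the spectral weights $\|P_iQ_j\|_F^2$ are needed, because $1\times1$ ``matrices'', i.e.\ scalars, already saturate the constant. For a scalar $\lambda\neq 0$ one has $\F_s(\lambda)=\frac{\lambda}{|\lambda|}\F(|\lambda|)$. Fix small $t>0$, set $\lambda_1=1+t$, $\lambda_2=e^{it}$, and let $\F$ be piecewise linear with $\F(0)=0$, $\F(1)=1$, $\F(1+t)=1-it$, and slope $1$ beyond $1+t$. A short case check gives $\|\F\|_{\Lip}=1$, while $\F_s(\lambda_1)=1-it$ and $\F_s(\lambda_2)=e^{it}$, so
\[
\frac{|\F_s(\lambda_1)-\F_s(\lambda_2)|}{|\lambda_1-\lambda_2|}=\frac{|1-it-e^{it}|}{|1+t-e^{it}|}\longrightarrow\sqrt2\quad\text{as }t\to0.
\]
This is precisely the example hidden in the paper's scalar computation \eqref{eq:maxnorm} (with $w=1-it$, $c=e^{it}$), and it closes your gap. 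As a final minor remark, your argument as written is for matrices; the extension to $A,B\in\mathcal{S}_2$ on an infinite-dimensional $\mathcal{H}$ is a routine truncation argument, carried out explicitly in the paper's proof of Theorem \ref{symmetricfcgeneral}.
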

For an interesting related result also having $\sqrt{2}$ as the best constant in the complex case and $1$ in the real case, see \cite{araki}. In terms of applications, the study of real valued functions is most relevant. Based on general arguments \cite{ding}, one can show that $\|\F_s\|_{\Lip}$ should be bounded in terms of $\|\F\|_{\Lip}$, but the fact that the constant is 1 is rather surprising and is likely to have an impact on algorithmic design involving the singular value functional calculus.

\section{The singular value functional calculus}\label{s2}

Let $\mathcal{H}$ be a separable Hilbert space of dimension $d$, $1\leq d\leq \infty$, and suppose that
$A : \mathcal{H} \to \mathcal{H}$ is a compact operator, $A \in \mathcal{B}_0$. Then
$A$ has singular value decomposition; there exist
an orthonormal basis $(u_n)_{n=0}^{d}$ of $\mathcal{H}$ and
an orthonormal sequence $(v_n)_{n=0}^{d}$ such that
\begin{equation} \label{eq:svddecomp}
 A = \sum_{n=0}^{d} s_n(A) \, u_n \otimes v_n,
\end{equation}
where $s_n(A)$ are the singular values of $A$. In other words $$A h = \sum_{n=0}^d s_n(A) \langle h, v_n \rangle u_n. $$
An equivalent formulation of \eqref{eq:svddecomp} is that
$A$ has a polar decomposition $A = U \Sigma$,
where $U$ is a partial isometry and $\Sigma = |A|$ is a positive
diagonalizable operator such that $\Sigma v_n = s_n(A) v_n$.
We will primarily be concerned with operators $A$ in the Hilbert-Schmidt class $\mathcal{S}_2$, i.e. compact operators such that $$\|A\|_{\mathcal{S}_2}^2=\sum_{n=0}^d s_n^2(A)<\infty.$$
Following standard conventions we denote this norm by $\|A\|_F^2$ whenever $d<\infty$, in which case it coincides with the $l^2$-norm of the elements of the matrix representation of $A$ in any orthonormal basis.

Given any continuous function $\F:\mathbb{R}_+\rightarrow\mathbb{C}$ such that $\F(0) = 0$ we define $\F_s: \mathcal{B}_0 \to \mathcal{B}_0$ by
\begin{equation} \label{eq:funcdef}
 \F_s(A) = \sum_{n=1}^\infty \F(s_n(A)) \, u_n \otimes v_n,
\end{equation}
or equivalently that $\F_s(A) = U \F(\Sigma)$,
 where $\F(\Sigma)$ is the operator such that $\F(\Sigma) v_n = \F(s_n(A)) v_n$. The
subscript $s$ indicates that we are dealing with a ``singular value
functional calculus''. To see that it is well defined, note that if $s=s_n(A)\neq 0$ and $h\in \ker (s^2 I-A^*A)$, then $$\F_s(A)h=\frac{\F(s)}{s}Ah.$$
However, note that if $s=0$ and we were to allow $\F(0)\neq 0$ it is clear that $\F_s(A) h$ could depend on the particular choice of  $(u_n)_n$ and $(v_n)_n$.

We remark that $\F$ only needs to be defined on $\mathbb{R}^+$ for $\F_s(A)$
to exist, and obviously $$\F_s(A)=\F(A)$$ for all positive operators $A$. For normal operators, the situation is more complex. Consider for example
\begin{equation*}A=\left(
              \begin{array}{cc}
                0 & 1 \\
                1 & 0 \\
              \end{array}
            \right)=\left(
              \begin{array}{cc}
                \tiny{\frac{1}{\sqrt{2}}} & \tiny{\frac{1}{\sqrt{2}}} \\
                \tiny{\frac{1}{\sqrt{2}}} & -\tiny{\frac{1}{\sqrt{2}}} \\
              \end{array}
            \right)\left(
              \begin{array}{cc}
                1 & 0 \\
                0 & -1 \\
              \end{array}
            \right)\left(
              \begin{array}{cc}
                \tiny{\frac{1}{\sqrt{2}}} & \tiny{\frac{1}{\sqrt{2}}} \\
                \tiny{\frac{1}{\sqrt{2}}} & -\tiny{\frac{1}{\sqrt{2}}} \\
              \end{array}
            \right),
            \end{equation*}
which has singular value decomposition $U\Sigma V^*=AII$. Then $$\F_s(A)=A\F(I)I=\left(
              \begin{array}{cc}
                0 & \F(1) \\
                \F(1) & 0 \\
              \end{array}
            \right)$$ whereas $\F(A)$ is not even defined in
            an the classical functional calculus, due to the negative eigenvalue $-1$. Moreover, if $\F$ is defined on $\R$ we clearly have $\F(A)\neq \F_s(A)$ unless $\F(1)=-\F(-1)$.
            This is further highlighted by the next proposition.
\begin{proposition}
Let $\F:\C\rightarrow\C$ be a continuous function with $\F(0) = 0$. Then $\F_s(A)=\F(A)$ for a normal compact operator $A : \mathcal{H} \to \mathcal{H}$ if and only if $\F$ satisfies $$\F(\lambda)=\frac{\lambda\F(|\lambda|)}{|\lambda|}$$
for every non-zero eigenvalue $\lambda$ of $A$.
\end{proposition}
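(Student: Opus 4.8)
The plan is to use the spectral theorem for compact normal operators to put $A$ in diagonal form and then compute both $\F(A)$ and $\F_s(A)$ term by term. Write $A = \sum_n \lambda_n\, e_n \otimes e_n$, where $(e_n)$ is an orthonormal basis of $\mathcal{H}$ and the $\lambda_n$ are the eigenvalues of $A$ (with $\lambda_n \to 0$ when $\dim\mathcal{H} = \infty$); by definition of the classical functional calculus, $\F(A) = \sum_n \F(\lambda_n)\, e_n \otimes e_n$. So the whole question reduces to finding the diagonal form of $\F_s(A)$ in the same basis.

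For that, set $\omega_n = \lambda_n/|\lambda_n|$ for $\lambda_n \neq 0$. Then $A = \sum_{\lambda_n\neq 0} |\lambda_n|\,(\omega_n e_n)\otimes e_n$ is a singular value decomposition of $A$: the vectors $\omega_n e_n$ form an orthonormal system, as do the $e_n$, and the singular values are $|\lambda_n|$. Feeding this decomposition into \eqref{eq:funcdef} gives $\F_s(A) = \sum_{\lambda_n \neq 0} \F(|\lambda_n|)\,(\omega_n e_n)\otimes e_n = \sum_{\lambda_n\neq 0} \tfrac{\lambda_n}{|\lambda_n|}\F(|\lambda_n|)\, e_n\otimes e_n$, the kernel contributing nothing since $\F(0)=0$. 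Alternatively, and perhaps more transparently, one can avoid committing to a particular SVD by invoking the pointwise identity $\F_s(A)h = \tfrac{\F(s)}{s}Ah$ for $h \in \ker(s^2 I - A^*A)$, $s\neq 0$, noted just before the proposition: for normal $A$ an eigenvector $h$ with $Ah = \lambda h$ satisfies $A^*h = \bar\lambda h$ and hence $A^*A h = |\lambda|^2 h$, so taking $s = |\lambda| > 0$ yields $\F_s(A)h = \tfrac{\F(|\lambda|)}{|\lambda|}\lambda h$; on $\ker A$ both $\F_s(A)$ and $\F(A)$ vanish.

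Comparing the two diagonal representations over the orthogonal decomposition $\mathcal{H} = \ker A \oplus \bigoplus_{\lambda\neq 0}\ker(\lambda I - A)$ finishes the proof. On each eigenspace $\ker(\lambda I - A)$ with $\lambda$ a nonzero eigenvalue, $\F(A)$ acts as the scalar $\F(\lambda)$ and $\F_s(A)$ acts as the scalar $\tfrac{\lambda\F(|\lambda|)}{|\lambda|}$; since this eigenspace is nonzero, the two operators agree there exactly when these scalars coincide, and on $\ker A$ they always agree. Summing over eigenspaces gives sufficiency, and testing against a single unit eigenvector gives necessity. The argument is essentially routine; the only point deserving care is the non-uniqueness of the singular value decomposition, which I would handle either by recording explicitly that $\F_s(A)$ is well defined (so it may be computed from the convenient SVD built above out of the spectral decomposition) or by using the manifestly basis-independent formula $\F_s(A)h = \tfrac{\F(s)}{s}Ah$ on $\ker(s^2 I - A^*A)$.
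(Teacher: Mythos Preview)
Your proof is correct and follows essentially the same approach as the paper: diagonalize the normal compact operator via the spectral theorem, read off an SVD from the spectral decomposition by pulling out the phases of the eigenvalues, and then compare the resulting diagonal forms of $\F(A)$ and $\F_s(A)$. Your extra remarks on the kernel and on the basis-independent formula $\F_s(A)h = \tfrac{\F(s)}{s}Ah$ are helpful elaborations but do not change the underlying strategy.
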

\begin{proof}
Since $A$ is a compact normal operator, there is an orthonormal basis $(v_n)_n$ of $\mathcal{H}$ such that
\begin{equation} \label{eq:spectraldecomp}
 A = \sum_{n=1}^\infty \lambda_n \, v_n \otimes v_n,
\end{equation}
where $\lambda_n$ are the eigenvalues of $A$, implicitly omitting the zero eigenvalues from the sum \eqref{eq:spectraldecomp}. On the other hand, an SVD of $A$ is given by
$$ A = \sum_{n=1}^\infty |\lambda_n| \, u_n \otimes v_n,$$
where $u_n = \overline{\lambda_n}/|\lambda_n|v_n$.
Therefore, we obtain that
$$\F(A) = \sum_{n=1}^\infty \F(\lambda_n) \, v_n \otimes v_n,$$
while
$$ \F_s(A) = \sum_{n=1}^\infty \frac{\lambda_n \F(|\lambda_n|)}{|\lambda_n|} \, v_n \otimes v_n,$$
proving the proposition.
\end{proof}

\begin{corollary}
Let $p$ be a polynomial without constant term. Then $p_s(A)=p(A)$ for all normal compact operators if and only if $p(z)=\alpha z$, $\alpha\in\C$.
\end{corollary}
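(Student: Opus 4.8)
The plan is to read this off directly from the Proposition. First I would observe that every nonzero $\lambda\in\C$ occurs as the sole nonzero eigenvalue of a normal compact operator: for a unit vector $v$, the rank-one operator $A=\lambda\,v\otimes v$ satisfies $A^*A=AA^*=|\lambda|^2\,v\otimes v$, hence is normal (and finite rank, so compact), with nonzero eigenvalue $\lambda$. Since $p$ has no constant term, $p(0)=0$, so the Proposition applies; running it over all such $A$ shows that $p_s(A)=p(A)$ holds for every normal compact operator if and only if
\[
p(\lambda)=\frac{\lambda\,p(|\lambda|)}{|\lambda|}\qquad\text{for every }\lambda\in\C\setminus\{0\}.
\]

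The implication $p(z)=\alpha z\ \Rightarrow$ this identity is immediate, since then $\lambda\,p(|\lambda|)/|\lambda|=\lambda\,\alpha|\lambda|/|\lambda|=\alpha\lambda=p(\lambda)$. For the converse, write $\lambda=re^{i\theta}$ with $r>0$, so that the identity reads $p(re^{i\theta})=e^{i\theta}p(r)$ for all $r>0$ and $\theta\in\R$. Expanding $p(z)=\sum_{k=1}^n a_k z^k$, the left-hand side equals $\sum_{k=1}^n a_k r^k e^{ik\theta}$ and the right-hand side equals $\sum_{k=1}^n a_k r^k e^{i\theta}$. Regarding both as polynomials in $r$ and equating coefficients of $r^k$ gives $a_k e^{ik\theta}=a_k e^{i\theta}$ for each $k$ and every $\theta$; picking $\theta$ with $e^{ik\theta}\neq e^{i\theta}$ (possible as soon as $k\ge 2$) forces $a_k=0$ for all $k\ge 2$. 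Hence $p(z)=a_1 z$, and we take $\alpha=a_1$.

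The argument is short and essentially computational; the one step that warrants a moment's care is the first reduction, i.e.\ that quantifying over all normal compact operators is the same as quantifying over all nonzero complex numbers in the role of eigenvalue. The rank-one examples make this transparent; alternatively one could use a single diagonal compact operator whose nonzero eigenvalues are dense in $\C$, together with the continuity of $p$ and of $\lambda\mapsto\lambda p(|\lambda|)/|\lambda|$ away from the origin.
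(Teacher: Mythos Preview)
Your proof is correct and follows exactly the intended route: the paper states this corollary without proof, as an immediate consequence of the preceding Proposition, and your argument simply makes that deduction explicit. The reduction via rank-one operators $\lambda\,v\otimes v$ and the coefficient comparison in $p(re^{i\theta})=e^{i\theta}p(r)$ are both fine.
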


\section{Complex doubly substochastic matrices}\label{s3}
This section contains the main technical tool for the proof of Theorem \ref{aoa}.
We say that a square matrix is complex doubly
substochastic (cdss) if for each row and column, the $\ell^1$-sum of entries is less than or equal to 1. (In \cite{simon}, such matrices are simply called doubly substochastic. However, most other sources using this term include a non-negativity condition on the elements, which is why we have chosen to clarify by adding \textit{complex}.) Our main interest in cdss-matrices stems from the fact that if $U$ and $V$ are unitary matrices then
$U\odot V$ is cdss, where $\odot$ denotes the Hadamard product, as follows immediately by the Cauchy-Schwarz inequality.

Let $\pi$ denote any permutation of length $n$ and let $\gamma$ be a
vector of the same length containing unimodular entries. We
denote by $M_{\pi,\gamma}$ the $n \times n$ matrix whose $(j,\pi_j)$'th value is
$\gamma_j$, all other entries zero.  The following lemma is likely known, but lacking a reference we provide a simple proof based on the Birkhoff-von Neumann theorem, (see e.g. \cite{bhatia}).

\begin{lemma} \label{lem:birk}
An  $n \times n$ matrix is complex doubly substochastic if and only if it lies in the convex hull of
$\{M_{\pi,\gamma}:\pi,\gamma\}$.
\end{lemma}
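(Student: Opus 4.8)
The plan is to establish both inclusions of the claimed convex-hull identity. For the easy direction, I would first observe that each $M_{\pi,\gamma}$ is itself cdss: each row has exactly one nonzero entry, of modulus $1$, and since $\pi$ is a permutation the same is true of each column, so every row- and column-sum of absolute values equals $1$. The set of cdss matrices is visibly convex (the constraints $\sum_j |m_{ij}| \le 1$ and $\sum_i |m_{ij}| \le 1$ are preserved under convex combinations, by the triangle inequality), so it contains the convex hull of $\{M_{\pi,\gamma}\}$.

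For the reverse inclusion, let $M = (m_{ij})$ be cdss. The idea is to separate the moduli from the phases. Write $m_{ij} = |m_{ij}| \, \omega_{ij}$ where $\omega_{ij}$ is unimodular (choosing, say, $\omega_{ij} = 1$ when $m_{ij}=0$), and let $P = (|m_{ij}|)$ be the nonnegative matrix of absolute values. Then $P$ is doubly substochastic in the classical sense. I would now pad $P$ to a genuine doubly stochastic matrix of larger size: the standard trick is to form the $2n \times 2n$ block matrix $\begin{pmatrix} P & I - P^{\text{row}} \\ \ast & \ast \end{pmatrix}$, or more simply to invoke the known fact (a consequence of Birkhoff–von Neumann) that every $n\times n$ doubly substochastic nonnegative matrix is a convex combination of $n\times n$ subpermutation matrices — matrices with at most one $1$ in each row and column and zeros elsewhere. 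Thus $P = \sum_k t_k Q_k$ with $t_k \ge 0$, $\sum_k t_k = 1$, and each $Q_k$ a subpermutation matrix.

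The main obstacle is that a subpermutation matrix $Q_k$ is \emph{not} of the form $M_{\pi,\gamma}$, because it may have empty rows and columns, whereas $M_{\pi,\gamma}$ has a unimodular entry in \emph{every} row and column. To fix this I would complete each $Q_k$ to a full permutation matrix: the zero rows and zero columns of $Q_k$ are equal in number, say $r_k$, and any bijection between them extends $Q_k$ to a permutation matrix $\widetilde Q_k$. The extra $r_k$ entries are ``phantom'' positions where $m_{ij}$ happens to be $0$; I can assign them any phases I like. Now reinstate the phases: define $M_{\pi^{(k)},\gamma^{(k)}}$ by placing $\omega_{ij}$ at each of the $n$ positions of $\widetilde Q_k$ (for the phantom positions $\omega_{ij}=1$ does no harm since the corresponding $|m_{ij}|$ is $0$). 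Because the phases $\omega_{ij}$ attached to a fixed position $(i,j)$ are the \emph{same} in every term in which that position is used, we get
\[
\sum_k t_k M_{\pi^{(k)},\gamma^{(k)}} = \Big(\sum_k t_k (\widetilde Q_k)_{ij}\, \omega_{ij}\Big)_{ij} = \big(|m_{ij}|\,\omega_{ij}\big)_{ij} = M,
\]
where in the middle step the contribution of any phantom position vanishes because it is weighted by $|m_{ij}| = 0$. This exhibits $M$ as a convex combination of matrices $M_{\pi,\gamma}$, completing the proof. The only delicate point to write carefully is the bookkeeping that ensures phantom positions never contribute and that the phase at each genuine position is consistent across the decomposition; everything else is the Birkhoff–von Neumann theorem applied to $P$.
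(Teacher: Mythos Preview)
Your overall strategy---reducing to the nonnegative doubly substochastic matrix $P=(|m_{ij}|)$, decomposing $P$ as a convex combination of subpermutation matrices, completing these to permutations and then reinstating phases---is different from the paper's proof, which instead characterizes the extreme points of the cdss set and appeals to Krein--Milman. Your route is more constructive; the paper's is a bit slicker but less explicit.

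There is, however, a genuine gap in your phantom-position step. You assert that the phantom positions (the entries added when completing a subpermutation $Q_k$ to a full permutation $\widetilde Q_k$) are positions where $m_{ij}=0$, so that their contribution ``is weighted by $|m_{ij}|=0$''. This is not true in general, and your displayed identity $\sum_k t_k (\widetilde Q_k)_{ij}\,\omega_{ij}=|m_{ij}|\,\omega_{ij}$ can fail. Take $n=2$ and $M=P=\tfrac12 I$, and decompose $P=\tfrac12 E_{11}+\tfrac12 E_{22}$. The only way to complete $E_{11}$ to a $2\times 2$ permutation matrix is $\widetilde Q_1=I$, with phantom position $(2,2)$; but $m_{22}=\tfrac12\neq 0$. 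Likewise $\widetilde Q_2=I$ with phantom position $(1,1)$. Your formula then yields $\sum_k t_k \widetilde Q_k=I\neq M$. The point is that the phantom contribution at $(i,j)$ is weighted by $t_k$, not by $|m_{ij}|$, so nothing forces it to vanish.

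The repair is simple and keeps your argument intact: for each $Q_k$, keep the phases $\omega_{ij}$ at the genuine positions of $Q_k$, but use \emph{two} phase vectors $\gamma^{(k)}_{\pm}$ that differ only by a sign at every phantom position, each with weight $t_k/2$. Then $\tfrac12\bigl(M_{\pi^{(k)},\gamma^{(k)}_+}+M_{\pi^{(k)},\gamma^{(k)}_-}\bigr)$ has entry $\omega_{ij}$ at the positions of $Q_k$ and $0$ at every phantom position, so summing over $k$ gives exactly $\Omega\odot P=M$, where $\Omega=(\omega_{ij})$. With this modification your proof is complete.
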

\begin{proof}
Let $\mathcal{V}$ denote the set of cdss $n \times n$-matrices. It is clearly a closed convex set. We shall show that the extreme points of $\mathcal{V}$ are precisely the matrices of the form $M_{\pi,\gamma}$ for some permutation $\pi$ and vector $\gamma$. The lemma then follows immediately by the Krein-Milman theorem (or rather, Minkowski's theorem on convex sets \cite{minkowski}, since we are in Euclidean space.)

First, let $\pi$ and $\gamma$ be given. Let $m_{ij}$ denote the $ij$:th entry of $M_{\gamma, \pi}$. Suppose that $M_{\gamma, \pi} =(A+B)/2$ for matrices $A, B \in \mathcal{V}$ with entries $a_{ij}$ and $b_{ij}$, respectively. Suppose that $|m_{ik}| = 1$. Since $|a_{ik}| \leq 1$ and $|b_{ik}| \leq 1$ this forces that $a_{ik} = b_{ik} = m_{ik}$ and hence that $a_{ij} = b_{ij} = 0$ for $1 \leq j \leq n$, $j \neq k$. Therefore $A = B = M_{\gamma, \pi}$, and thus $M_{\gamma, \pi}$ is an extreme point of $\mathcal{V}$.

For the converse, let $M \in \mathcal{V}$, with entries $m_{ij}$, be an extreme point. Consider first the case where $M$ has a row with sum of absolute values strictly less than 1, say the $p:$th row. Then clearly
\begin{equation*} \label{eq:rowless1}
\sum_{i=1}^n\sum_{j=1}^n |m_{ij}| < n,
\end{equation*}
which, upon changing the order of summation, shows that there is also a column with a sum of absolute values strictly less than 1, say the $q$:th column. Let $E$ be the matrix with its $pq$:th entry $\varepsilon$, all other entries $0$, and let $A = M - E$ and $B = M + E$. For sufficiently small $\varepsilon$ we find that $A$ and $B$ are cdss, which contradicts the fact that $M$ is an extreme point of $\mathcal{V}$, since $M = (A+B)/2$.

We have thus shown that the extreme point $M$ has sums of absolute values of all rows and colums equal to 1. In other words, the matrix $M^|$ with entries $|m_{ij}|$ is a doubly stochastic matrix. By the Birkhoff-von Neumann theorem, $M^|$ is either of the form $M_{\pi, \gamma}$ for a permutation $\pi$ and $\gamma = (1, 1, \ldots, 1)$ or it is not an extreme point and thus of the form $M^| = (A+B)/2$ for two doubly stochastic matrices $A$ and $B$, $A \neq B$. In the former case, $M$ is  of the form $M_{\pi, \tilde{\gamma}}$ for a suitable sequence $\tilde{\gamma}$. In the latter case, as seen by adjusting the arguments of the entries of $A$ and $B$, $M$ is clearly not an extreme point of $\mathcal{V}$, a contradiction.
\end{proof}
\section{Operator Lipschitz estimates}\label{s4}

Throughout we let $\F:\mathbb{R}_+\rightarrow\mathbb{C}$ be Lipschitz with $\F(0)=0$. Let $\lambda_1, \lambda_2 \in \C$ be two scalars interpreted as operators on $\mathcal{H}=\mathbb{C}$. Then $\F_s(\lambda_j) = \frac{\lambda_j}{|\lambda_j|}\F(|\lambda_j|)$ for $j=1,2$. Hence a Lipschitz condition $\|\F_s(A) - \F_s(B)\|_{\mathcal{S}_2} \leq C \|A - B\|_{\mathcal{S}_2} $ implies that \begin{equation}\label{optimal}|c_1 \F(x) - c_2\F(y)| \leq C |c_1 x - c_2 y|\end{equation} for all $x, y \geq 0$ and $c_1, c_2\in \mathbb{T}$, where $\mathbb{T}$ denotes the unit circle in $\mathbb{C}$. This motivates the following definition
\begin{equation}\label{apa}
\|\F\|_{\Lip-\mathbb{C}}=\sup_{x,y \in \R_+,~c\in\mathbb{T}}=\frac{| \F(x) - c\F(y)|}{| x - c y|}.
\end{equation}

\begin{proposition} \label{lem:lip}
Suppose that $\F: \mathbb{R}_+ \to \C$ satisfies $\F(0) = 0$. Then
\begin{equation} \label{eq:rotlipM}
\|\F\|_{\Lip-\mathbb{C}} \leq \sqrt{2}\|\F\|_{\Lip}
\end{equation}
where $\sqrt{2}$ is optimal. If $\F$ is real-valued, $\F: \mathbb{R}_+ \to \R$, it holds that
\begin{equation} \label{eq:rotlipreal}
\|\F\|_{\Lip-\mathbb{C}} =\|\F\|_{\Lip}
\end{equation}
\end{proposition}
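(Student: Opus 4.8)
The plan is to reduce the inequality $|\F(x)-c\F(y)|\le \sqrt 2\,\|\F\|_{\Lip}\,|x-cy|$ to a purely two-dimensional geometric statement about the points $\F(x),\F(y)\in\C$. Write $L=\|\F\|_{\Lip}$. Fix $x,y\ge 0$ and $c=e^{i\theta}\in\mathbb{T}$; by symmetry we may assume $x\ge y$. The only information available about $\F$ is that $|\F(x)-\F(y)|\le L|x-y|$, together with $\F(0)=0$, which gives $|\F(x)|\le Lx$ and $|\F(y)|\le Ly$. Thus it suffices to prove the following elementary claim: if $p,q\in\C$ and $0\le y\le x$ satisfy $|p|\le Lx$, $|q|\le Ly$, and $|p-q|\le L(x-y)$, then $|p-cq|\le \sqrt 2\,L|x-cy|$ for all $c\in\mathbb T$. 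After dividing by $L$ we may assume $L=1$.

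For the claim, the key step is to bound both sides in terms of $x,y,\theta$ and the single free parameter $|p-q|$. For the numerator, I would write $p-cq=(p-q)+(1-c)q$ and use the triangle inequality: $|p-cq|\le|p-q|+|1-c|\,|q|\le (x-y)+|1-c|\,y$. For the denominator, $|x-cy|^2=x^2+y^2-2xy\cos\theta=(x-y)^2+2xy(1-\cos\theta)=(x-y)^2+xy|1-c|^2$, using $|1-c|^2=2(1-\cos\theta)$. So with $a=x-y\ge 0$, $t=|1-c|\in[0,2]$, and noting $xy\ge y^2$ (wait — this is false in general; instead keep $xy$), we must show
\[
(a+ty)^2\le 2\bigl(a^2+xy\,t^2\bigr).
\]
Expanding, this is $a^2+2aty+t^2y^2\le 2a^2+2xyt^2$, i.e. $2aty\le a^2+2xyt^2-t^2y^2 = a^2 + t^2 y(2x-y)$. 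Since $x\ge y$ we have $2x-y\ge x\ge y$, hence $t^2y(2x-y)\ge t^2y^2$, and by AM–GM $a^2+t^2y^2\ge 2aty$, which closes the estimate. Thus $\|\F\|_{\Lip-\C}\le\sqrt2\,L$.

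For optimality of $\sqrt2$, I would exhibit a single Lipschitz $\F$ (or a sequence) achieving the ratio. Tracking the equality cases above: we need $a=ty$ (so $x=y(1+t)$) and $2x-y=y$ forcing $x=y$, which is incompatible unless $t=0$; so exact equality is not attained, but it should be approached. Take $\F$ supported near a point, e.g. $\F(s)=\max(0,1-|s-1|)$ — a tent of slope $1$ peaked at $s=1$ with $\F(1)=1$. Then with $x$ slightly less than $1$ and $y=2-x$ slightly more than $1$, we have $\F(x)=\F(y)=x$ and picking $c=-1$ gives $|\F(x)-c\F(y)|=2x\to 2$ while $|x-cy|=x+y=2$; this only gives ratio $\to 1$. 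A better choice: put the masses at genuinely different heights. Let $x\to 0^+$ with $\F(x)\approx x$ achieving $|\F(x)|=x$, and $y=0$ so $\F(y)=0$, $c=e^{i\theta}$: ratio is $1$. The correct extremal configuration is the one saturating the triangle inequalities simultaneously; I expect the sharp example to have two points $x>y>0$ with $\F(x),\F(y)$ and $\F(x)-\F(y)$ all of maximal modulus and suitably aligned, with $c$ chosen so $p-cq$ aligns with $p-q$. Concretely, take $y=1$, $x=1+\delta$, $\F(1)=i\alpha$ and $\F(1+\delta)=i\alpha+\delta$ for suitable $\alpha$ (Lipschitz constant $1$ provided $\alpha\le 1$; take $\alpha=1$ so $|\F(1)|=1$ and $|\F(1+\delta)|=\sqrt{1+\delta^2}\le 1+\delta$), then optimise over $c$ and let $\delta\to 0$; this is where the ratio $\to\sqrt2$ should emerge. \emph{This optimality computation is the main obstacle}: the upper bound is routine AM–GM, but pinning down the extremal (limiting) example and verifying the ratio tends to exactly $\sqrt2$ — not merely something larger than $1$ — requires care.

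Finally, for the real-valued case one shows $\|\F\|_{\Lip-\C}\le\|\F\|_{\Lip}$ (the reverse inequality $\|\F\|_{\Lip-\C}\ge\|\F\|_{\Lip}$ being immediate on taking $c=1$). Here $\F(x),\F(y)\in\R$, so $|\F(x)-c\F(y)|^2=\F(x)^2-2\F(x)\F(y)\cos\theta+\F(y)^2$ and $|x-cy|^2=x^2-2xy\cos\theta+y^2$. Write $u=\F(x)$, $v=\F(y)$; we must show $u^2-2uv\cos\theta+v^2\le L^2(x^2-2xy\cos\theta+y^2)$ given $|u|\le Lx$, $|v|\le Ly$, $|u-v|\le L|x-y|$. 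Since $\cos\theta\in[-1,1]$, the left side, as an affine function of $\cos\theta$, is maximised at an endpoint. At $\cos\theta=1$ it equals $(u-v)^2\le L^2(x-y)^2$ which is the right side at $\cos\theta=1$; at $\cos\theta=-1$ it equals $(u+v)^2$, and one checks $(u+v)^2\le L^2(x+y)^2$ from $|u|\le Lx,|v|\le Ly$. Since the right side is also affine in $\cos\theta$ and the inequality holds at both endpoints, it holds throughout — wait, affinity of both sides makes this a clean interpolation. This settles the real case.
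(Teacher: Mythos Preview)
Your upper bound argument via the triangle inequality and AM--GM is correct and considerably shorter than the paper's route. The paper instead parametrizes $\F(x)=w\F(y)$, reduces to the scalar supremum
\[
\sup_{w\in\C,\ |c|=1}\frac{|w-c|}{|1+|w-1|-c|}=\sqrt{2},
\]
and verifies this by an algebraic identity culminating in $(|w-2|-1)^2\ge 0$. The advantage of their detour is that the same supremum formula immediately yields optimality: for any $w,c$ one defines $\F$ with $\F(1)=1$, $\F(1+|w-1|)=w$ and reads off the ratio. Your approach trades this structural payoff for a much cleaner inequality. Your real-valued argument (checking the affine-in-$\cos\theta$ inequality at the endpoints $\pm1$) is correct and essentially a repackaging of the paper's one-line computation.

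The genuine gap is optimality of $\sqrt{2}$, which you flag but do not close. In fact your own candidate works with one line of computation. Take $y=1$, $x=1+\delta$, $\F(1)=i$, $\F(1+\delta)=i+\delta$ (extend to $\mathbb{R}_+$ by $\F(t)=it$ for $t\le 1$ and $\F(t)=i+(t-1)$ for $t\ge 1$; this has $\|\F\|_{\Lip}=1$). With $c=e^{i\delta}$ one has $1-c=-i\delta+O(\delta^2)$, hence
\[
\frac{|\F(x)-c\F(y)|}{|x-cy|}=\frac{|i(1-c)+\delta|}{|(1-c)+\delta|}=\frac{|\delta+\delta+O(\delta^2)|}{|\delta-i\delta+O(\delta^2)|}\longrightarrow\frac{2}{\sqrt{2}}=\sqrt{2}.
\]
This is precisely the configuration your equality analysis predicts: $a=x-y=\delta$, $t=|1-c|\approx\delta$, so $a=ty$, and the triangle inequality $|p-cq|\le|p-q|+|1-c||q|$ is asymptotically tight because $p-q=\delta$ and $(1-c)q\approx(-i\delta)(i)=\delta$ are aligned.
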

\begin{proof}
We may clearly assume that $\|\F\|_{\Lip}=1$. Suppose first that $\F$ is real-valued and write $c = a + ib$. Noting that the hypotheses imply that $|\F(x)| \leq x$ for all $x$, we have
$$ |\F(x)- c\F(y)|^2 = |\F(x) - \F(y)|^2 + 2(1-a) \F(x)\F(y) \leq |x-y|^2 + 2(1-a)xy = |x-cy|^2,$$ which shows that $\|\F\|_{\Lip-\mathbb{C}} \leq\|\F\|_{\Lip}$. Since the reverse inequality is obvious, \eqref{eq:rotlipreal} follows.

For $\F$ complex-valued, the computation is more involved. The inequality \eqref{eq:rotlipM} is clearly equivalent with
\begin{equation} \label{eq:rotlip}
|\F(x) - c\F(y)| \leq \sqrt{2}|x - cy|, \quad x,y \in \R_+, ~c\in\mathbb{T},
\end{equation}
(still assuming that $\|\F\|_{\Lip}=1$). Fix $c$ and suppose without loss of generality that $y \leq x$. If $\F(y) = 0$ there is nothing to prove since $|x-y| \leq |x-cy|$. Similarly, if $\F(x)=\F(y)$, \eqref{eq:rotlip} follows from the fact that $|\F(y)| \leq y$. If $\F(y) \neq 0$ and $\F(x) \neq \F(y)$, let $w \in \mathbb{C}$ be such that $\F(x) = w\F(y)$. Since $|\F(x)-\F(y)| = |w-1||\F(y)| \leq x-y$ there is a constant $d$, $0 < d \leq 1$ such that
$$|\F(y)| = d \frac{x-y}{|w-1|}.$$
Since $|\F(y)| \leq y$ it follows that
\begin{equation}\label{nana} x \leq \left(1 + \frac{|w-1|}{d}\right) y.\end{equation}
It is straightforward to check that the function $x \mapsto (x-y)/|x-cy|$ is increasing when $x \geq y$, which combined with \eqref{nana} yields that
$$\frac{x-y}{|x-cy|} \leq \frac{|w-1|/d}{ \left|1+\frac{|w-1|}{d}-c \right|} \leq \frac{|w-1|}{d|1+|w-1|-c|}.$$
Recalling that $\F(x) = w\F(y)$ we hence conclude that
$$|\F(x) - c\F(y)| = |w-c| d \frac{x-y}{|w-1|} \leq |w-c|\frac{|x-cy|}{|1+|w-1|-c|} .$$
We claim that
\begin{equation} \label{eq:maxnorm}
\sup_{w \in \C,\, |c|=1} \frac{|w-c|}{|1+|w-1|-c|} = \sqrt{2}.
\end{equation}
From \eqref{eq:maxnorm} we immediately deduce \eqref{eq:rotlip}, and it also implies that $\sqrt{2}$ is the best possible constant. To see the latter, fix $w \in \C$ and a unimodular $c$ and set $x=1+|w-1|$ and $y=1$. We may then define a function $\F$ as in the statement of the theorem satisfying that $\F(1) = 1$, $\F(1+|w-1|) = w$, which gives $$\frac{|\F(x)-c\F(y)|}{|x-cy|}=\frac{|w-c|}{|1+|w-1|-c|}.$$

To see that the supremum of \eqref{eq:maxnorm} is at least $\sqrt{2}$, let $w = 1 - it$ and let $c = e^{it}$. For this choice of $w$ and $c$ we have that
 $$\frac{|w-c|}{|1+|w-1|-c|} = \frac{|1-e^{it}-it|}{|1-e^{it} + t|} \to \sqrt{2}, \quad t \to 0.$$
 For the upper bound, note that upon squaring it is equivalent with
 $$ |w- c|^2 \leq 2|1+|w-1|-c|^2,$$
 which upon expanding, reordering and noting that $|c|=1$ is equivalent with
 $$|w|^2+5-4\re~w + 4|w-1| - 2\re \left[ \overline{c} (2(1+|w-1|)-w) \right] \geq 0. $$
 This inequality is true for all unimodular $c$ if and only if
  \begin{equation} \label{eq:diffeq}
  |w|^2+5-4\re ~w + 4|w-1| - 2|2(1+|w-1|)-w| \geq 0, \quad w \in \C.
  \end{equation}
  Since $|2(1+|w-1|)-w| \leq 2|w-1| + |w - 2|$ by the triangle inequality, \eqref{eq:diffeq} is implied by
  $$|w|^2+5-4\re~ w  - 2|w-2| \geq 0, \quad w \in \C. $$
 The left hand side equals
  $$|w-2|^2 + 1 - 2|w-2| =(|w-2|-1)^2,$$
which completes the proof.
\end{proof}

The next theorem is the key result of the paper. Theorem \ref{aoa} is an immediate corollary of this result and Proposition \ref{lem:lip}. Set \begin{equation}\label{SiOpLipS}\|\F_s\|_{\Lip}=\sup_{A,B\in\mathcal{S}_2}\frac{\|\F_s(A)-\F_s(B)\|_{\mathcal{S}_2}}{\|A-B\|_{\mathcal{S}_2}}.\end{equation}
It will also follow from the proof that the definition \eqref{SiOpLip} is independent of the dimension, as claimed in the introduction, and that it coincides with \eqref{SiOpLipS}.

\begin{theorem}\label{symmetricfcgeneral}
Let $\F:\mathbb{R}_+ \to \mathbb{C}$ be Lipschitz with $\F(0)=0$. Suppose that $A, B \in \mathcal{S}_2$. Then $\F_s$ satisfies
\begin{equation} \label{eq:oplipcomplex}
\|\F_s(A)-\F_s(B)\|_{\mathcal{S}_2} \leq \|\F\|_{\Lip-\mathbb{C}}\|A-B\|_{\mathcal{S}_2}
\end{equation}
and this estimate is optimal. In other words, $\|\F_s\|_{\Lip}=\|\F\|_{\Lip-\mathbb{C}}$.
\end{theorem}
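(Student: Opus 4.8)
The plan is to reduce \eqref{eq:oplipcomplex} to the finite-dimensional case and there extract a complex doubly substochastic matrix to which Lemma~\ref{lem:birk} can be applied. First I would remove the infinite-dimensional and infinite-rank complications: if $A,B\in\mathcal{S}_2$ and $A_N,B_N$ denote the operators obtained by truncating the singular value decompositions of $A$ and $B$ after $N$ terms, then $A_N\to A$ and $B_N\to B$ in $\mathcal{S}_2$, while also $\F_s(A_N)\to\F_s(A)$ and $\F_s(B_N)\to\F_s(B)$ in $\mathcal{S}_2$, the latter because $\sum_n|\F(s_n(A))|^2\leq\|\F\|_{\Lip}^2\sum_ns_n(A)^2<\infty$. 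It therefore suffices to prove \eqref{eq:oplipcomplex} for finite-rank $A,B$, in which case everything takes place inside the finite-dimensional subspace $W$ spanned by all left and right singular vectors of $A$ and $B$. On $W$ I would pad both decompositions with zero singular values---legitimate precisely because $\F(0)=0$---so that $A=\sum_{i=1}^n a_i\,p_i\otimes q_i$ and $B=\sum_{j=1}^n b_j\,r_j\otimes t_j$ with $(p_i),(q_i),(r_j),(t_j)$ all orthonormal bases of $W$ and $a_i=s_i(A)\geq 0$, $b_j=s_j(B)\geq 0$.

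Next I would evaluate both sides of \eqref{eq:oplipcomplex} in the matrix representation relative to the basis $(t_j)$ in the domain and $(p_i)$ in the codomain. A direct computation gives $\langle(\F_s(A)-\F_s(B))t_j,p_i\rangle=\F(a_i)X_{ij}-\F(b_j)Y_{ij}$ and $\langle(A-B)t_j,p_i\rangle=a_iX_{ij}-b_jY_{ij}$, where $X_{ij}=\langle t_j,q_i\rangle$ and $Y_{ij}=\langle r_j,p_i\rangle$ are the entries of unitary matrices. Expanding the squares and summing, the pure-square contributions collapse---rows and columns of a unitary matrix being unit vectors---to $\sum_i|\F(a_i)|^2+\sum_j|\F(b_j)|^2$ and $\sum_ia_i^2+\sum_jb_j^2$, and all remaining dependence on the singular vectors is concentrated in the single matrix $C$ with entries $C_{ij}=X_{ij}\overline{Y_{ij}}$, which, being the Hadamard product of the two unitaries $X$ and $\overline{Y}$, is complex doubly substochastic by the Cauchy--Schwarz observation of Section~\ref{s3}. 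The step I expect to be the crux is noticing that, after this expansion, the quantity
\[
\Phi(C):=\|\F\|_{\Lip-\mathbb{C}}^2\|A-B\|_F^2-\|\F_s(A)-\F_s(B)\|_F^2
\]
is a \emph{real-affine} function of $C$: a constant independent of $C$ plus $2\re\sum_{i,j}\bigl(\F(a_i)\overline{\F(b_j)}-\|\F\|_{\Lip-\mathbb{C}}^2a_ib_j\bigr)C_{ij}$.

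Given this, Lemma~\ref{lem:birk} expresses $C$ as a convex combination of matrices $M_{\pi,\gamma}$, and affineness reduces the desired inequality $\Phi(C)\geq0$ to the claim $\Phi(M_{\pi,\gamma})\geq0$ for every permutation $\pi$ and unimodular vector $\gamma$. Substituting $C_{ij}=\gamma_i$ for $j=\pi_i$ and $C_{ij}=0$ otherwise, and relabelling the $b$-summations through $\pi$, I expect $\Phi(M_{\pi,\gamma})$ to split as a sum over $i$ of
\[
\|\F\|_{\Lip-\mathbb{C}}^2\,|a_i-\overline{\gamma_i}b_{\pi_i}|^2-|\F(a_i)-\overline{\gamma_i}\F(b_{\pi_i})|^2,
\]
and each summand is nonnegative directly from the definition \eqref{apa} of $\|\F\|_{\Lip-\mathbb{C}}$ (with $x=a_i$, $y=b_{\pi_i}$, $c=\overline{\gamma_i}\in\mathbb{T}$), which establishes \eqref{eq:oplipcomplex}.

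For the reverse inequality and the dimension-independence claim I would reuse the one-dimensional computation already noted around \eqref{optimal}: on $\mathcal{H}=\mathbb{C}$, taking $A=c_1x$ and $B=c_2y$ with $x,y\geq0$ and $c_1,c_2\in\mathbb{T}$ gives $\|\F_s(A)-\F_s(B)\|/\|A-B\|=|\F(x)-c\F(y)|/|x-cy|$ with $c=\overline{c_1}c_2$, so $\|\F_s\|_{\Lip}\geq\|\F\|_{\Lip-\mathbb{C}}$; since the upper bound \eqref{eq:oplipcomplex} holds in every dimension while this lower bound is already attained in dimension one, the suprema in \eqref{SiOpLip} and \eqref{SiOpLipS} both equal $\|\F\|_{\Lip-\mathbb{C}}$. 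Apart from spotting the affine structure in $C$, the only real care needed is in bookkeeping the complex conjugates so that the object extracted is genuinely complex doubly substochastic; everything downstream is routine plus a single appeal to \eqref{apa}.
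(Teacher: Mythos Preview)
Your proposal is correct and follows essentially the same approach as the paper: reduce to finite rank, extract the Hadamard product of two unitary change-of-basis matrices as a complex doubly substochastic matrix, decompose it via Lemma~\ref{lem:birk}, and apply the defining inequality \eqref{apa} termwise. The only difference is cosmetic: you package the comparison into the single real-affine functional $\Phi(C)$, whereas the paper writes $\|A-B\|_F^2$ and $\|\F_s(A)-\F_s(B)\|_F^2$ separately as convex combinations over the \emph{same} decomposition of the cdss matrix (same, because only the diagonal factors $\Sigma_A,\Sigma_B$ change when passing to $\F_s$) and then compares term by term.
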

\begin{proof}
We first prove \eqref{eq:oplipcomplex} for finite square matrices. That is, we will show that for $d \times d$ matrices $A$ and $B$ we have
\begin{equation} \label{eq:oplipfinite}
\|\F_s(A)-\F_s(B)\|_{F} \leq \|\F\|_{\Lip-\mathbb{C}}\|A-B\|_{F}.
\end{equation}
Suppose that this has been proved and consider general $A, B \in \mathcal{S}_2$ with singular value decompositions
$$A = \sum_{n=1}^\infty s_n(A) \, u^A_n \otimes v^A_n, \quad B = \sum_{n=1}^\infty s_n(A) \, u^B_n \otimes v^B_n.$$
For $N \geq 1$ we may consider
$$A_N = \sum_{n=1}^N s_n(A) \, u^A_n \otimes v^A_n, \quad B = \sum_{n=1}^N s_n(A) \, u^B_n \otimes v^B_n.$$
to be operators acting on $\mathcal{V}_N = \textrm{span} \, \{u^A_n, v^A_n, u^B_n, v^B_n \, | \, 1 \leq n \leq N\}$. Note that the singular value decompositions of $A_N$ and $B_N$ are identical whether considered operators on $\mathcal{H}$ or on $\mathcal{V}_N$, and that the exact same statement applies to $\F_s(A_N)$ and $\F_s(B_N)$. Hence \eqref{eq:oplipfinite} applied to the finite-dimensional space $\mathcal{V}_N$ gives us that
$$\|\F_s(A_N) - \F_s(B_N)|_{\mathcal{S}_2(\mathcal{H})} \leq \|\F\|_{\Lip-\mathbb{C}} \|A_N - B_N\|_{\mathcal{S}_2(\mathcal{H})}.$$
Since $\|A_N - A\|_{\mathcal{S}_2(\mathcal{H})} \to 0$ and $\|\F_s(A_N) - \F_s(A)\|_{\mathcal{S}_2(\mathcal{H})} \to 0$ as $N \to \infty$, and similarly for $B$, the inequality \eqref{eq:oplipcomplex} follows.

We now turn to proving \eqref{eq:oplipfinite} for $d \times d$-matrices $A$ and $B$, for which we express the singular value decompositions with the usual matrix notation;
$$A = U_A\Sigma_A V_A^*, \, \quad B = U_B\Sigma_B V_B^*.$$
We have the following formula for $\|A-B\|_F^2$,
 \begin{align*}
\|A-B\|_F^2&=\|U_A\Sigma_A V_A^*-U_B
\Sigma_B V_B^*\|_F^2=\|U_B^*U_A\Sigma_A -\Sigma_B
V_B^*V_A\|_F^2 \\ &=\|A\|^2_F+\|B\|_F^2-2\re \sum_{ae} \Sigma_B
\overline{V_B^*V_A} \odot U_B^*U_A\Sigma_A,\end{align*}
where $\sum_{ae}$ denotes the operation of summing all entries of a matrix,
and $\overline{M}$ denotes the action of taking the complex conjugate of every entry
of a matrix $M$.

Since $\overline{V_B^*V_A} \odot U_B^*U_A$ is cdss, Lemma \ref{lem:birk} implies that there exist
$c_1,\ldots,c_M$ in $[0,1]$ satisfying $\sum_{n=1}^Mc_n=1$, permutations $\pi_n$ and length-$d$ vectors $\gamma_n$ with unimodular entries, $1 \leq n \leq M$, such that
$$
\overline{V_B^*V_A} \odot U_B^*U_A=\sum_{n=1}^N c_n
M_{\pi_n,\gamma_n}.$$
We get
\begin{align*}\|A-B\|_F^2 &=\|A\|^2_F+\|B\|_F^2-2 \re \sum_{n=1}^Mc_n
\sum_{ae} \Sigma_B M_{\pi_n,\gamma_n}\Sigma_A
\\&=\sum_{n=1}^M c_n \left( \sum_{i=1}^Ns_i(A)^2+\sum_{i=1}^Ns_i(B)^2-2 \re \sum_{i=1}^N
s_i(B) \gamma_{n,i} s_{\pi_{n, i}}(A)\right)
\\&=\sum_{n=1}^M c_n \sum_{i=1}^N| s_i(B) - \gamma_{n,i} s_{\pi_{n,i}}(A)|^2.
\end{align*}
This identity applied to $\F_s(A) = U_A
\F(\Sigma_A) V_A^*$ and $\F_s(B) = U_B \F(\Sigma_B) V_B^*$ immediately gives
\begin{align*}&\|\F_s(A)-\F_s(B)\|_F^2=\sum_{n=1}^M c_n \sum_{i=1}^N| \F(s_i(B)) - \gamma_{n,i} \F(s_{\pi_{n,i}}(A))|^2
\end{align*}
Since $|\F(x)- \gamma \F(y)|\leq \|\F\|_{\Lip-\mathbb{C}}|x- \gamma y|$ for every $x, y \geq 0$ and $\gamma \in \mathbb{T}$, we get
\begin{align*}
&\|\F_s(A)-\F_s(B)\|_F^2 = \sum_{n=1}^M c_n \sum_{i=1}^N| \F(s_i(B)) - \gamma_{n,i} \F(s_{\pi_{n,i}}(A))|^2
\leq\\& \leq \|\F\|_{\Lip-\mathbb{C}}^2\sum_{n=1}^M c_n \sum_{i=1}^N| s_i(B) - \gamma_{n,i} s_{\pi_{n,i}}(A)|^2 = \|\F\|_{\Lip-\mathbb{C}}^2\|A-B\|_F^2,\end{align*}
which establishes \eqref{eq:oplipcomplex}. The optimality of \eqref{eq:oplipcomplex} follows immediately from the argument surrounding \eqref{optimal}. This also shows that $\|\F_s\|_{\Lip}=\|\F\|_{\Lip-\mathbb{C}}$ independent of whether we use definition \eqref{SiOpLipS} or \eqref{SiOpLip} with arbitrary fixed dimension.

\end{proof}

\bibliography{MCPerfekt}

\begin{thebibliography}{10}

\bibitem{aleksandrov}
Aleksei Aleksandrov and Vladimir Peller.
\newblock Functions of perturbed operators.
\newblock {\em Comptes Rendus Mathematique}, 347(9):483--488, 2009.

\bibitem{anderssonalternating}
Fredrik Andersson and Marcus Carlsson.
\newblock Alternating projections on nontangential manifolds.
\newblock {\em Constructive Approximation}, 38(3):489--525, 2013.

\bibitem{actwIEEE}
Fredrik Andersson, Marcus Carlsson, Jean-Yves Tourneret, and Herwig Wendt.
\newblock A new frequency estimation method for equally and unequally spaced
  data.
\newblock {\em Signal Processing, IEEE Transactions on}, 2014.

\bibitem{araki}
Huzihiro Araki and Shigeru Yamagami.
\newblock An inequality for hilbert-schmidt norm.
\newblock {\em Communications in Mathematical Physics}, 81(1):89--96, 1981.

\bibitem{bhatia}
Rajendra Bhatia.
\newblock {\em Matrix analysis}, volume 169.
\newblock Springer Science \& Business Media, 1997.

\bibitem{candesMRI}
Emmanuel~J Candes, Carlos~A Sing-Long, and Joshua~D Trzasko.
\newblock Unbiased risk estimates for singular value thresholding and spectral
  estimators.
\newblock {\em Signal Processing, IEEE Transactions on}, 61(19):4643--4657,
  2013.

\bibitem{chu}
Moody~T Chu, Robert~E Funderlic, and Robert~J Plemmons.
\newblock Structured low rank approximation.
\newblock {\em Linear algebra and its applications}, 366:157--172, 2003.

\bibitem{ding}
Chao Ding, Defeng Sun, Jie Sun, and Kim-Chuan Toh.
\newblock Spectral operators of matrices.
\newblock {\em arXiv:1401.2269}, 2014.

\bibitem{farforovskaya}
Yu~Farforovskaya and L~Nikolskaya.
\newblock Modulus of continuity of operator functions.
\newblock {\em St. Petersburg Mathematical Journal}, 20(3):493--506, 2009.

\bibitem{fazel}
Maryam Fazel, Haitham Hindi, and Stephen~P Boyd.
\newblock A rank minimization heuristic with application to minimum order
  system approximation.
\newblock In {\em American Control Conference, 2001. Proceedings of the 2001},
  volume~6, pages 4734--4739. IEEE, 2001.

\bibitem{higham}
Nicholas~J Higham.
\newblock Computing the nearest correlation matrix—a problem from finance.
\newblock {\em IMA journal of Numerical Analysis}, 22(3):329--343, 2002.

\bibitem{kittaneh}
Fuad Kittaneh.
\newblock On lipschitz functions of normal operators.
\newblock {\em Proceedings of the American Mathematical Society},
  94(3):416--418, 1985.

\bibitem{larsson2}
Viktor Larsson and Carl Olsson.
\newblock Convex envelopes for low rank approximation.
\newblock In {\em Energy Minimization Methods in Computer Vision and Pattern
  Recognition}, pages 1--14. Springer, 2015.

\bibitem{larsson1}
Viktor Larsson, Carl Olsson, Erik Bylow, and Fredrik Kahl.
\newblock Rank minimization with structured data patterns.
\newblock In {\em Computer Vision--ECCV 2014}, pages 250--265. Springer, 2014.

\bibitem{minkowski}
Hermann Minkowski.
\newblock {\em Geometrie der zahlen}.
\newblock 1910.

\bibitem{potapov}
Denis Potapov and Fedor Sukochev.
\newblock Operator-lipschitz functions in schatten--von neumann classes.
\newblock {\em Acta mathematica}, 207(2):375--389, 2011.

\bibitem{recht}
Benjamin Recht, Maryam Fazel, and Pablo~A Parrilo.
\newblock Guaranteed minimum-rank solutions of linear matrix equations via
  nuclear norm minimization.
\newblock {\em SIAM review}, 52(3):471--501, 2010.

\bibitem{rw}
R~Tyrrell Rockafellar and Roger J-B Wets.
\newblock {\em Variational analysis}, volume 317.
\newblock Springer Science \& Business Media, 2009.

\bibitem{simon}
Barry Simon.
\newblock {\em Trace ideals and their applications}, volume~35.
\newblock Cambridge University Press Cambridge, 1979.

\bibitem{wihler}
Thomas~P Wihler.
\newblock On the h{\"o}lder continuity of matrix functions for normal matrices.
\newblock {\em Journal of Inequalities in Pure and Applied Mathematics},
  10:1--5, 2009.

\end{thebibliography}
\bibliographystyle{plain}

\end{document}